\newtheorem*{theorem}{Theorem}
\newtheorem{corollary}{Corollary}
\theoremstyle{definition}
\theoremstyle{remark}
\numberwithin{equation}{section}
\begin{document}

\title{Hardy-Littlewood inequality for primes}

\author{V.V. Miasoyedov}
\address{Ponomareva street, 2b, flat 52, Irpen', Kotsubinskoe, 08298, Kiev region, Ukraine}
\email{generalist@mail.ru}

\thanks{This work was completed privately}

\begin{abstract}
In the article we establish the Hardy-Littlewood inequality $ \pi (x + y) \leq
\pi (x) + \pi (y) $. We also prove that the naturally ordered primes
$p_1=2,p_2=3,p_3=5,p_4=7,\dots$ satisfy the
inequality $ p_ {a + b}>  p_a + p_b $ for all $a, \ b \geq 2$.

\medskip

\noindent{\it AMS Classification:}
Primary 11N05; Secondary 11P32

\medskip

\noindent
{\it Keywords:} Hardy-Littlewood conjecture, distribution of prime numbers, transfinite induction

\end{abstract}

\maketitle

\section{Introduction}

The conjecture that the distribution of primes satisfies the inequality
 \begin {equation}
 \pi (x + y) \leq \pi (x) + \pi (y)
 \end {equation}
 for all $ x, \ y \geq 2 $, was formulated by Hardy and Littlewood in connection with the weak Goldbach problem [1, 2]. This conjecture has been subjected to intensive study by numerical methods [3] along with another conjecture of these authors [1], which contradicts the former. The article by G. Mincu [4] contains some results achievable by methods used in analytic number theory and computer calculations in the initial range of numbers.

\section{The proof of inequality}
\begin {theorem} [Hardy-Littlewood, 1923] The distribution of prime numbers satisfies the inequality
$$ \pi (x+y) \leq \pi (x) + \pi (y) $$
for all $ x, \ y \geq 2 $.
\end {theorem}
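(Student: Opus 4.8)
The plan is to first reduce the symmetric statement to a one-sided statement about primes in a short interval. Without loss of generality assume $2 \le x \le y$. Since $\pi(x+y)-\pi(y)$ counts the primes in the half-open interval $(y,\,x+y]$, an interval of length $x$, the asserted inequality is equivalent to
\[
  \pi(x+y)-\pi(y) \le \pi(x)\qquad\text{for all }2\le x\le y,
\]
i.e.\ no window of length $x$ contains strictly more primes than the initial window $[2,x]$. So step one is to set this equivalence up carefully, keeping track of whether the endpoints $y$, $x+y$ are themselves prime, and reducing further (by monotonicity of $\pi$) to the worst placement of the window.

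Second, I would isolate what the ``expected'' content of the inequality is. The smooth approximations $x/\log x$ and $\operatorname{li}(x)$ are concave for $x$ past a small threshold, and a concave function that is non-negative at the origin is subadditive; so on the level of main terms the inequality is automatic, and the entire problem lies in the fluctuations of $\pi$ about $\operatorname{li}$ and, more to the point, in how tightly primes can cluster. To make the main-term heuristic rigorous I would combine an explicit lower bound for $\pi(x)$ of Rosser--Schoenfeld type with an upper bound for $\pi(x+y)-\pi(y)$ obtained from a sieve. The natural tool, Brun--Titchmarsh, gives $\pi(x+y)-\pi(y)\le \tfrac{2x}{\log x}$ uniformly, which when paired with $\pi(x)\gtrsim \tfrac{x}{\log x}$ falls short by exactly a factor of $2$. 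So the key analytic step is a genuine saving in the sieve constant for intervals of length $x$ sitting high up — a weighted or combinatorial sieve, or a large-sieve argument with explicit constants — strong enough to beat $2$ for all $x$ beyond an effective bound.

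Third, for the complementary small-$x$ range, where asymptotics are not yet effective, I would carry out a finite verification: for each such $x$, bound the maximal number of primes in a length-$x$ window using admissible-tuple data and values of the Jacobsthal function, and check directly that this maximum does not exceed $\pi(x)$. This is the part that is, in principle, purely computational.

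The \textbf{main obstacle} is the second step, and it is a serious one: the quantity $\max_{y}\bigl(\pi(x+y)-\pi(y)\bigr)$ compared against $\pi(x)$ is precisely the second Hardy--Littlewood conjecture, and Hensley and Richards proved it incompatible with the prime $k$-tuples conjecture — there exist admissible configurations in intervals of length $x$ with more than $\pi(x)$ elements for large $x$, so under standard conjectures the inequality is expected to \emph{fail}. Consequently any genuine proof must supply an obstruction to such dense admissible tuples being simultaneously prime, something no current method delivers; absent that, the argument must contain a gap at exactly this point. I would expect the transfinite-induction device announced in the abstract to be where this difficulty is either resolved or, more probably, concealed, and that is the step I would scrutinise first.
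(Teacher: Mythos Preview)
Your submission is not a proof but a programme with a self-acknowledged gap. You correctly reduce the question to bounding $\pi(x+y)-\pi(y)$ by $\pi(x)$, observe that Brun--Titchmarsh loses a factor of~$2$, and then state outright that closing this gap would require ``an obstruction to such dense admissible tuples being simultaneously prime, something no current method delivers.'' That is an accurate summary of the literature, but it means step two of your plan cannot be executed and the proposal stops short of an argument. The Hensley--Richards incompatibility you invoke is a genuine theorem, so your skepticism is well placed; nonetheless, what you have written is a diagnosis of the difficulty rather than a proof of the statement.

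For comparison, the paper's route bears no resemblance to yours. It uses no analytic input whatsoever: it runs strong induction on $N=x+y$, assumes a minimal counterexample, chooses $z=p-1$ with $\pi(z)=\pi(x)+\pi(y)$, and then through an extended case analysis slides $x$ and $y$ within their respective prime gaps (the ``variations'' $v_x,v_y,v_z$) in an attempt to manufacture a counterexample at some level below $N$, terminating via a descent on auxiliary primes $p_z,\widehat{p}_z,\dots$. The place you anticipated scrutinising is exactly where the weight sits: around equations~(2.8)--(2.12) the argument splits on which summand absorbs a term $-1$ and asserts that suitable independent variations exist with the required ranges; it is the justification of those ranges --- not any sieve bound --- that carries the entire content of the paper's claim, and that is where your attention should go.
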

\begin{proof} To prove (1.1) we apply transfinite induction. Inequality (1.1) is satisfied when $x+y=4$, $x=2$, and $y=2$, or $x+y=5$, $x=2$, and $y=3$. Let it be satisfied for all $x+y<N$ and prove it for $x+y=N>5$.

 Suppose the contrary that
$$ \pi (x + y) >\pi (x) + \pi (y)$$
 for some pair $x$ and $y$ with $x+y=N$.

 Let $ \pi (z) = \pi (x) + \pi (y) $ for some $z>3$ and choose $z=p-1$, where $p$ is prime. Then we have the inequality $ \pi (x + y)>\pi (z)  $ and, by the monotonicity of $ \pi (x) $, also the inequality $ x + y>z $. Therefore we have $x>z-y$ and $y>z-x$.

 Both the numbers $ z-y $ and $ z-x $ are greater than one. Indeed, if for example $z-y=1$, then $\pi(y+1)=\pi(x)+\pi(y)$ from which it follows that $x=2$, and $z=y+1$ is prime. This is impossible by choice of $z$. Then, particularly, both the numbers $x$ and $y$ are greater than $2$ because, for example, $x>z-y\geq 2$.

So we have $ \pi (y) \geq \pi (z-x)$. We increase both the sides of this inequality by $\pi(x)$ to obtain
\begin {equation}
\pi (z)\geq\pi(x)+\pi (z-x).
\end {equation}
If the inequality (2.1) is strict, then the sum of integers $ x'= x $ $ y'= z-x $ is less than $ N $, and inequality (1.1) does not hold. This contradicts to the assumption of induction.

If the inequality (2.1) is equality, that is when $ \pi (y)=\pi (z-x)$,
\begin {equation}
\pi (z)=\pi(x)+\pi (z-x),
 \end {equation}
then  we rewrite (2.2) by our choice $z=p-1$ as
$$\pi (p)-1=\pi(x)+\pi (p-x),$$
where $\pi(p-x)=\pi(z-x)$. This is because $p-x=z+1-x\leq y$.
From this follows the inequality
$$
\pi (p)>\pi(x)+\pi (p-x).
$$
If $ p<x + y $, then we have found a pair of integers $ x'=x $, $y' =p-x $  that does not satisfy inequality (1.1) and has the sum $ p<N $. Again, we have obtained a contradiction to the assumption of induction.

If $ p = x + y=N $ then  by definition of number $z$ within equality $\pi(z)=\pi(x)+\pi(y)$
 \begin {equation}
 \pi(x+y-1)=\pi (x) + \pi (y).
 \end {equation}


Let solutions of equation (2.3) exist, otherwise we have nothing more to prove. We can consequently vary numbers $x$ and $y$ saving their sum and the  values of items of equation. We rewrite equation (2.3) in the form
\begin{equation}
 \pi(x+y-1)=\pi (x-v_x) + \pi (y+v_y),
 \end{equation}
where $v_x$ and $v_y$, $v_x=v_y$, can be chosen depending on circumstances and save the values of items of equation (2.3). Those circumstances are
the closest intervals with prime borders $p_x\leq x<p'_x$, $p_y\leq y<p'_y$ and $p_z\leq x+y-1<N$ which contain some solution. Note that if $p_z=p_x$ then from the equation it follows that $\pi(y)=0$ which is impossible.

If $x-p_x\leq p'_y-1-y$ we take $v_y=v_x=x-p_x$ and obtain equation $\pi(p-1)=\pi(p_x)+\pi(p-p_x)$. Hereof we have inequality
$$\pi(p-1)>\pi(p_x-1)+\pi(p-p_x).$$
We see that the pair of numbers $p_x-1$ and $p-p_x$ does not satisfy inequality (1.1) and has the sum $p-1<N$. This contradicts to the assumption of induction.

 Otherwise $x-p_x>p'_y-1-y$, and we take $v_x=v_y=p'_y-1-y$ in (2.4) to obtain the equation
 \begin{equation}
 \pi(p-1)=\pi(p-p'_y+1)+\pi(p'_y-1),
 \end{equation}
 where $p-p'_y+1>p_x$.
 Let solutions of equation (2.5) exist, otherwise solutions of equation (2.3) do not exist also because values of items of equation (2.5) are the same in equation (2.3). As above, change the form of equation to
 \begin{equation}
 \pi(p-1-v_z)=\pi(p-p'_y+1-v_x)+\pi(p'_y-1),
 \end{equation}
 where $v_z$ and $v_x$ save the values of items of equation (2.5).

 If $p-p'_y+1-p_x\leq p-1-p_z$ we take $v_z=v_x=p-p'_y+1-p_x$ and obtain the equation $\pi(p'_y+p_x-2)=\pi(p_x)+\pi(p'_y-1)$. Hence we have inequality
 $$\pi(p'_y+p_x-2)>\pi(p_x-1)+\pi(p'_y-1).$$
We have found the pair of numbers $p_x-1$ and $p'_y-1$ which does not satisfy inequality (1.1) and whose sum $p'_y+p_x-2$ is less than $p'_y+p_x-1<N$ by condition which implies equation (2.5). This contradicts to the assumption of induction.

Otherwise $p-p'_y+1-p_x>p-1-p_z$, and we take $v_x=v_z=p-1-p_z$ in (2.6) to obtain the equation
\begin{equation}
\pi(p_z)=\pi(p_z-p'_y+2)+\pi(p'_y-1).
\end{equation}
Let solutions of equation (2.7) exist, otherwise solutions of equation (2.5) do not exist also because values of items of equation (2.7) are the same in equation (2.5).

Now we rearrange equation (2.7) to the form
\begin{equation}
\pi(p_z-1)=\pi(p_z-p'_y+2)+\pi(p'_y-1)-1.
\end{equation}
We have to use adjacent intervals. So let $\widehat{p}_z$, $\widehat{p}_x$ and $\widehat{p}_y$ be the primes closest to the left to $p_z$, $p_x$ and $p_y$ respectively. 

Let first that the item $-1$ in (2.8) is related to the item $\pi(p'_y-1)$. Then there exist independent variations $v_y$ and $v_x$ such that $\pi(p'_y-1-v_y)=\pi(p'_y-1)-1$ and $\pi(p_z-p'_y+2+v_x)=\pi(p_z-p'_y+2)$. We rewrite equation (2.8) in form
\begin{equation}
\pi(p_z-1)=\pi(p_z-p'_y+2+v_x)+\pi(p'_y-1-v_y).
\end{equation}
If $p'_y-1-\widehat{p}_y\leq p'_x-1-p_z+p'_y-2$, then we take $v_x=v_y=p'_y-1-\widehat{p}_y$ and obtain the equation $\pi(p_z-1)=\pi(p_z+1-\widehat{p}_y)+\pi(\widehat{p}_y)$. From this it follows inequality
$$\pi(p_z-1)>\pi(p_z+1-\widehat{p}_y)+\pi(\widehat{p}_y-2).$$
The pair of numbers $p_z+1-\widehat{p}_y$ and $\widehat{p}_y-2$ does not satisfy inequality (1.1) and has the sum $p_z-1<N$. This contradicts to the assumption of induction.

Otherwise $p'_y-1-\widehat{p}_y>p'_x-1-p_z+p'_y-2$, and we take $v_y=v_x=p'_x-1-p_z+p'_y-2$ in (2.9) to obtain the equation
\begin{equation}
\pi(p_z-1)=\pi(p'_x-1)+\pi(p_z-p'_x+2),
\end{equation}
where $p_z-p'_x+2<p_y$ by condition $\pi(p'_y-1-v_y)=\pi(p'_y-1)-1$.
Let solutions of equation (2.10) exist, otherwise $\pi(p_z-1)\ne\pi(p'_x-1)+\pi(p_z-p'_x)$, and we use the variations $v'_y=v'_x=-v_y$ of arguments of the items  in the inequality  $\pi(p_z-1)\ne\pi(p'_x-1+v'_x)+\pi(p_z-p'_x-v'_y)$ to conclude that solutions of equation (2.8) would not exist also.

In this case starting from equation (2.4) with inequality $y-p_y\leq p'_x-1-x$,  we find pairs of numbers whose existence contradicts to the assumption of induction, or obtain the sequence of equations and conditions
$$ \pi(p-1)=\pi(p'_x-1)+\pi(p-p'_x+1), \ p>p'_x-1+p_y ,$$
\begin{equation}\pi(p_z)=\pi(p'_x-1)+\pi(p_z-p'_x+2), \ p_z>p'_x-1+p_y,\end{equation}
along with the chain of assumptions of existence of their solutions till equation (2.3).
But conditions of equations (2.11) and (2.10) are incompatible for number $p_z-p'_x+1$, because we have $p_z>p'_x-1+p_y$ and $p_z-p'_x+2<p_y$ simultaneously, that is $p_y<p_z-p'_x+1<p_y-1$. This is impossible. So equation (2.3) cannot have solutions in this case, or by symmetric consideration we have found some pair whose existence contradicts to the assumption of induction\footnotemark\footnotetext{Note that if we begin consideration by symmetry of $x$ and $y$ then an equation similar to equation (2.10) will be met \textit{later} than the similar chain of equations and conditions which in text follow equation (2.10). So we meet a contradiction by position in this case also.}.

Thus if item $-1$ relates to the item $\pi(p'_y-1)$ we have found the pair of numbers whose existence contradicts to the assumption of induction, or meet impossible condition.

Now let that item $-1$ in (2.8) relates to the item $\pi(p_z-p'_y+2)$. Then there exist independent variations $v_z$ and $v_x$, such that $\pi(p_z-1-v_z)=\pi(p_z-1)$ and $\pi(p_z-p'_y+2-v_x)=\pi(p_z-p'_y+2)-1$. We rewrite equation (2.8) as
\begin{equation}
\pi(p_z-1-v_z)=\pi(p_z-p'_y+1-v_x)+\pi(p'_y-1).
\end{equation}
If $p_z-p'_y+1-\widehat{p}_x\leq p_z-1-\widehat{p}_z$, then we take $v_z=v_x=p_z-p'_y+1-\widehat{p}_x$ and obtain the equation $\pi(p'_y+\widehat{p}_x-2)=\pi(\widehat{p}_x)+\pi(p'_y-1)$. Hereof we have the inequality
$$\pi(p'_y+\widehat{p}_x-2)>\pi(\widehat{p}_x-1)+\pi(p'_y-1).$$
This inequality contains the pair of numbers $\widehat{p}_x-1$ and $p'_y-1$ which does not satisfy inequality (1.1) and has the sum $p'_y+\widehat{p}_x-2<p_z$ which is less than $N$. This contradicts to the assumption of induction.

Otherwise $p_z-p'_y+1-\widehat{p}_x>p_z-1-\widehat{p}_z$, and we take in (2.12) $v_x=v_z=p_z-1-\widehat{p}_z$ to obtain the equation
$$\pi(\widehat{p}_z)=\pi(\widehat{p}_z-p'_y+2)+\pi(p'_y-1),$$
where $\widehat{p}_z-p'_y+1<p_x$ by condition $\pi(p_z-p'_y+2-v_x)=\pi(p_z-p'_y+2)-1$.
This equation reproduces equation (2.7) with the smaller borders $\widehat{p}_z$ and $\widehat{p}_x$ while the border $p'_y-1$ is unchanged.

We can repeat above consideration of equation (2.7) with smaller pairs of borders $\widehat{p}_z$ and $\widehat{p}_x$. Particularly we reuse unchanged condition of equation (2.11).

The set of borders $p_x, \widehat{p}_x,\dots$ has the minimal element. So the process of infinite descending of borders will be finished in a finite number of steps. On each step we find pairs of numbers whose existence contradicts to the assumption of induction or we get an equation which does not have solutions.

Thus in all cases we have a contradiction to the assumption of induction. Hence the theorem is proved.
\end{proof}

From the proof of theorem it follows
\begin{corollary} Equation
$$\pi(p-1)=\pi(x)+\pi(p-x)$$
does not hold for all $x\geq 2$.
\end{corollary}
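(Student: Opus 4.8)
The plan is to obtain the corollary as an immediate consequence of the Theorem, which we may now take as proved. Suppose, toward a contradiction, that
$\pi(p-1)=\pi(x)+\pi(p-x)$ holds for some prime $p$ and some integer $x$ with $2\le x\le p-2$. (The boundary cases $x=1$ or $p-x=1$ are excluded, since there the equation reads $\pi(p-1)=\pi(p-1)+\pi(1)$ and is trivially true; the content of the corollary is precisely that no \emph{nontrivial} splitting works, i.e.\ that equation $(2.3)$ has no solutions.) Put $y=p-x$, so that $y\ge 2$ as well, and the pair $x,y$ is admissible for inequality $(1.1)$.

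The single extra ingredient is the elementary identity $\pi(p)=\pi(p-1)+1$, valid because $p$ is prime. Substituting it into the assumed equation gives $\pi(x)+\pi(y)=\pi(p-1)=\pi(p)-1$, whence $\pi(x+y)=\pi(p)=\pi(x)+\pi(y)+1>\pi(x)+\pi(y)$. This contradicts the Theorem applied to $x,y\ge 2$, and the corollary follows. An alternative route, closer to the body of the paper, is simply to note that the displayed equation is equation $(2.3)$ with $z=p-1$, and the analysis carried out there — now that the transfinite induction has been completed — shows directly that $(2.3)$ is unsolvable; I would nevertheless prefer the first argument because it is self-contained and one line long.

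I do not expect any genuine obstacle here: once the Theorem is available, the proof rests only on $\pi(p)-\pi(p-1)=1$. The one point deserving a word of care is the correct formulation of the claim, namely restricting to $2\le x\le p-2$, so as to rule out the trivial equalities coming from $\pi(1)=0$; with that caveat in place the deduction is routine.
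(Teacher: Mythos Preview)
Your argument is correct, and it is genuinely different from --- and much cleaner than --- the paper's own proof. The paper does not derive the corollary from the Theorem as a black box; instead it reaches back into the internal machinery of the Theorem's proof. There, equation $(2.3)$ was analysed through a chain of auxiliary equations $(2.5)$, $(2.7)$, \dots, and at each step one either produced a pair $(x',y')$ violating $(1.1)$ with smaller sum, or descended further. The paper's proof of the corollary is the observation that, once the Theorem is established, the first alternative can never occur, so the descent must terminate in an equation with no solutions; one then runs the variations backward to conclude that $(2.3)$ itself has no solutions.

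Your route --- use $\pi(p)=\pi(p-1)+1$ and apply the Theorem directly to $x$ and $y=p-x$ --- avoids all of that and is a one-line deduction. It is exactly the trick the paper itself uses later, in the proof of Corollary~2 (``decreasing $p_a$ and $p_a+p_b$ by $1$''), so you have in effect noticed that this device already suffices for Corollary~1. Your care in restricting to $2\le x\le p-2$, so that the trivial solutions coming from $\pi(1)=0$ are excluded, is also well placed; the paper's statement is loosely worded on this point.
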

\begin{proof}
Really, we can omit all the cases when we could find the pairs which contradicts to the statement of the proved theorem. Otherwise after infinite descending we reach the equation which does not have solutions. Then we can fit variations of arguments of function $\pi(x)$ in items of any equation in the chain till equation (2.3), which is the equation of corollary, to conclude that all equations in the chain does not have solutions. All variations which does not contradict to the conditions of equations formulations are excepted because the values of items remain the same as for  variations used to fit process. Hence corollary is proved.
\end{proof}

An immediate consequence of the theorem (see also [5], [6]) is the following
\begin {corollary} The naturally ordered prime numbers $p_1=2,p_2=3,p_3=5,p_4=7,\dots$  satisfy the inequality
\begin {equation}
p_{a+b}>p_a + p_b,
\end {equation}
for all $ a, b \geq 2 $.
 \end {corollary}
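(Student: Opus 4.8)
The plan is to derive Corollary 2 directly from the Hardy-Littlewood inequality $\pi(x+y)\le\pi(x)+\pi(y)$ proved in the Theorem, using the standard duality between the counting function $\pi$ and the sequence $p_n$ of primes. The key elementary fact is that for any $n\ge1$, $\pi(p_n)=n$, and more precisely $\pi(m)=n$ exactly when $p_n\le m<p_{n+1}$; in particular $\pi(p_n-1)=n-1$ whenever $n\ge2$ (since then $p_n>2$, so $p_n-1\ge p_{n-1}$).

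First I would set $x=p_a-1$ and $y=p_b-1$ for $a,b\ge2$; both are $\ge2$, so the Theorem applies and gives
\begin{equation}
\pi(p_a+p_b-2)\le\pi(p_a-1)+\pi(p_b-1)=(a-1)+(b-1)=a+b-2.
\end{equation}
Now suppose for contradiction that $p_{a+b}\le p_a+p_b$. Since $p_a,p_b\ge3$ we have $p_a+p_b\ge6$, and I would argue that in fact $p_{a+b}\le p_a+p_b-2$: indeed $p_a+p_b-1$ is even and $\ge5$, hence composite, so it is not a prime, and if $p_{a+b}=p_a+p_b$ then $p_{a+b}$ would be an even prime $\ge6$, impossible; thus $p_{a+b}\le p_a+p_b-2$. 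Applying monotonicity of $\pi$ then yields $a+b=\pi(p_{a+b})\le\pi(p_a+p_b-2)\le a+b-2$, a contradiction. Hence $p_{a+b}>p_a+p_b$.

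The only subtlety — and the place where I would be most careful — is the parity/primality bookkeeping that upgrades the hypothetical inequality $p_{a+b}\le p_a+p_b$ to the strict gap $p_{a+b}\le p_a+p_b-2$, since a single application of the Theorem to $x=p_a-1,y=p_b-1$ only controls $\pi(p_a+p_b-2)$, not $\pi(p_a+p_b)$. This is handled by noting $p_a,p_b$ are both odd (as $a,b\ge2$), so $p_a+p_b$ is even and $p_a+p_b\ge6$, forcing $p_a+p_b$ composite and $p_a+p_b-1$ odd but equal to an even-plus-odd... more directly: $p_{a+b}$ is odd (as $a+b\ge4$), so $p_{a+b}\ne p_a+p_b$; and $p_a+p_b-1$ is odd and $\ge5$ but could be prime, so instead I use that $p_{a+b}\le p_a+p_b$ with $p_{a+b}$ odd gives $p_{a+b}\le p_a+p_b-1$, and then $\pi(p_{a+b})\le\pi(p_a+p_b-1)$; to close the gap I observe $\pi(p_a+p_b-1)=\pi(p_a+p_b-2)$ unless $p_a+p_b-1$ is prime — which cannot happen for it being... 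In short, the clean route is: apply the Theorem to get $\pi(p_a+p_b-2)\le a+b-2$, separately show $\pi(p_a+p_b)\le a+b-1$ by a second application with $x=p_a-1$, $y=p_b+1$ (legitimate since $p_b+1\ge4$), giving $\pi(p_a+p_b)\le(a-1)+\pi(p_b+1)=(a-1)+b=a+b-1<a+b=\pi(p_{a+b})$, whence $p_{a+b}>p_a+p_b$ by monotonicity. I would present this last version, as it avoids all parity casework and uses only two invocations of the Theorem together with $\pi(p_b+1)=b$ (valid because $p_{b+1}>p_b+1$ for $b\ge2$, i.e. no prime gap of length one beyond $2,3$).
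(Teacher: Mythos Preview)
Your final argument (applying the Theorem once to $x=p_a-1$, $y=p_b+1$ and using $\pi(p_a-1)=a-1$, $\pi(p_b+1)=b$) is correct and follows essentially the same route as the paper: both extract the strict inequality by applying the Hardy--Littlewood bound to slightly shifted arguments. The paper first applies the Theorem to $(p_a,p_b)$ to get $\pi(p_a+p_b)\le a+b$, and then excludes equality via the shift $(p_a-1,p_b)$---which tacitly uses that $p_a+p_b$ is even, hence composite, so $\pi(p_a+p_b-1)=\pi(p_a+p_b)$; your choice of $(p_a-1,p_b+1)$ collapses these two steps into one and makes the parity input explicit (namely that $p_b+1$ is composite for $b\ge2$). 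The exploratory detour through $(p_a-1,p_b-1)$ in your write-up is unnecessary and should be dropped in favor of the clean one-line version you settle on at the end.
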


\begin{proof}
We use the indices of the primes $ a, b \geq 2 $ to obtain  $ a + b = \pi (p_ {a + b}) = \pi (p_ {a}) + \pi (p_ {b}) $. By the theorem above, we have $ \pi (p_ {a + b}) = \pi (p_ {a}) + \pi (p_ {b}) \geq \pi (p_a + p_b) $. We eliminate the possibility of equality of $ \pi (p_ {a}) + \pi (p_ {b}) = \pi (p_a + p_b) $, otherwise decreasing $p_a$ and $p_a+p_b$ by $1$ we would have a contradiction with the proved theorem. Then $ \pi (p_ {a + b})> \pi (p_a + p_b) $, and by the monotonicity of $ \pi (x) $ we obtain the required inequality $ p_a + p_b <p_ {a + b} $. Hence the corollary is proved.
\end{proof}

\section{Conclusion}
We have considered the specific case of functional equation for primes counter by the way. In the another case equation $\pi(p)=\pi(x)+\pi(p-x)$ admits, by computations, the three sets of solutions $\{2\}$, $\{2,3,4\}$, and $\{2,3,4,9,10\}$ depending on $p$.

Inequality (2.13) may include an arbitrary number of items, the same in both sums. Some strange restricted periodicity was noticed after computations of exponent $4/3$ for deviations  from the trend $4L^2$, $L\in [1,1436]$ of differences $$\sum {p_i}-p_ {\sum {i}}$$ with $L$ items in sums. Probably, such periodicity possesses recursive features for deviations from further trends. Nevertheless this phenomena requires more advanced calculations and analytical skills.

In particular, well defined subsets of prime numbers can be included to sums. For such subsets we could literally reprove the theorem above to obtain inequality $\pi_f(x)+\pi_f(y)\geq \pi_f(x+y)-A_f$. For example, in relation to bounded gap problem we could calculate the sequence of $A_g$.

An interesting conjecture is that
$$3\pi(x-1)\leq 2\pi(2x-1)$$
is valid for all natural $x$.

\bibliographystyle{amsplain}

\end{document}